\theoremstyle{plain}
\newtheorem{theorem}{Theorem}
\newtheorem{lemma}[theorem]{Lemma}
\newtheorem{corollary}[theorem]{Corollary}
\theoremstyle{remark}
\newtheorem{remark}[theorem]{Remark}
\definecolor{konrad}{rgb}{0,0,1}
\definecolor{jeremy}{rgb}{1,0,0}
\renewcommand{\geq}{\geqslant}
\renewcommand{\leq}{\leqslant}
\begin{document}

\title{The number of small-degree vertices in matchstick graphs}
\author{J\'er\'emy Lavoll\'ee}
\address[J. Lavoll\'ee]{Department of Mathematics, Universitat Polit\`{e}cnica de Catalunya, Spain.}

\author{Konrad J. Swanepoel}
\address[K. J. Swanepoel]{Department of Mathematics, The London School of Economics and Political Science, Houghton Street, London WC2A 2AE, U.K.}

\date{}
\subjclass[2020]{Primary 52C10. Secondary 05C10}
\keywords{matchstick graph, penny graph, plane unit-distance graph}

\begin{abstract}
A matchstick graph is a crossing-free unit-distance graph in the plane.
Harborth (1981) proposed the problem of determining whether there exists a matchstick graph in which every vertex has degree exactly $5$.
In 1982, Blokhuis gave a proof of non-existence.
A shorter proof was found by Kurz and Pinchasi (2011) using a discharging method.
We combine their method with the isoperimetric inequality to show that there are $\Omega(\sqrt{n})$ vertices in a matchstick graph on $n$ vertices that are of degree at most $4$, which is asymptotically tight.
\end{abstract}

\maketitle

\section{Introduction}
Matchstick graphs are graphs which can be drawn in the plane with edges represented by unit length straight-line segments that intersect only at their endpoints.
Harborth \cite{oberwolfach, lighter} introduced these in 1981 and posed the problem of finding the least number of vertices for a $k$-regular matchstick graph.
In 1982, Blokhuis \cite{blokhuis} proved that no 5-regular matchstick graph exists.
Kurz and Pinchasi \cite{kurz} gave a short proof of the same result by a discharging method.

In this paper we find a stronger result by considering the vertices of degree at most $4$ in a matchstick graph with no isolated vertices.
\begin{theorem}\label{thm:small-deg}
    For any matchstick graph on $n$ vertices with no isolated vertices and $n_i$ vertices of degree $i$, we have $4n_1+3n_2+2n_3+n_4 > \sqrt{\pi \sqrt{3}\cdot n}$.
\end{theorem}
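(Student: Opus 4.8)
The plan is to prove the inequality by combining two estimates: a lower bound on the number of triangular faces (hence on the area covered by the drawing) obtained from Euler's formula, which the isoperimetric inequality converts into a lower bound on the length $p$ of the outer boundary; and a lower bound $D\geq p$ on $D:=4n_1+3n_2+2n_3+n_4=\sum_v(5-d(v))^+$ obtained by a discharging argument in the style of Kurz and Pinchasi.

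\emph{Reductions and the easy half.} Since $D$ and $n$ are additive over connected components and $\sum_j\sqrt{n_j}\geq\sqrt{\sum_j n_j}$, it suffices to treat a connected matchstick graph $G$; the case $n\leq 2$ (a single edge) is immediate, so I would assume $n\geq 3$, which forces $p\geq 3$. Fixing a plane drawing, let $m$ be the number of edges, $f$ the number of faces including the unbounded one, $F_3$ the number of triangular bounded faces, and $R$ the union of the closed bounded faces, of area $A$. Because $G$ is simple, every bounded face is bounded by a closed walk of length $\geq 3$, and a bounded face of length exactly $3$ is a cycle, hence an \emph{equilateral} triangle of area $\sqrt3/4$; so $A\geq\frac{\sqrt3}{4}F_3$. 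The topological boundary of $R$ consists of edges of the outer walk, so $\operatorname{perim}(R)\leq p$, and the isoperimetric inequality gives $p^2\geq 4\pi A\geq\pi\sqrt3\,F_3$. Counting edge--face incidences, $\sum_{\text{bounded }g}\ell(g)=2m-p\geq 3F_3+4(f-1-F_3)$; combining this with $f=m-n+2$ and with $2m=\sum_v d(v)\geq 5n-D$ yields $F_3\geq n-D+p+4$, and therefore
\[
 p^2\;\geq\;\pi\sqrt3\,(n-D+p+4).
\]

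\emph{The hard half: $D\geq p$.} This is where crossing-freeness and the threshold $5$ are used, and it is exactly the type of local statement the Kurz--Pinchasi method is built to handle. At each vertex $v$ of the outer walk, the angles of the incident bounded faces at $v$ sum to $2\pi$ minus the angle the unbounded face makes at $v$; summing over the outer walk, whose unbounded-face angles total $(p+2)\pi$, and using that every corner of an equilateral triangular face equals $\pi/3$, one obtains $\sum_v(5-d(v))\geq p+6$ \emph{provided} every bounded-face corner along the walk lies in a triangle, which already gives $D\geq p+6>p$ in that case. The remaining issue is non-triangular faces meeting the boundary: such a face can be thin, so it may border a high-degree boundary vertex contributing nothing to $D$, but a discharging argument in the spirit of Kurz and Pinchasi should show that the apparent loss is always recovered -- either from the small-angle vertices of these faces that lie on the boundary, or from their large-angle vertices, which are pinned to degree at most $4$ and so feed $D$ even when they are interior. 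The hard part will be this compensation step: it requires a careful case analysis of the possible local arrangements of non-triangular faces, precisely as in the proof that no $5$-regular matchstick graph exists.

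\emph{Conclusion.} Granting $D\geq p$, the map $t\mapsto t^2-\pi\sqrt3\,t$ is increasing for $t\geq\pi\sqrt3/2$, and $p\geq 3>\pi\sqrt3/2$, so from $D\geq p$ and the displayed inequality,
\[
 D^2-\pi\sqrt3\,D\;\geq\;p^2-\pi\sqrt3\,p\;\geq\;\pi\sqrt3\,(n-D+4),
\]
whence $D^2\geq\pi\sqrt3\,n+4\pi\sqrt3>\pi\sqrt3\,n$, i.e. $D>\sqrt{\pi\sqrt3\cdot n}$, which is the claim.
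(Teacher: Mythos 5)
Your outer framework---the count $F_3\geq n-D+p+4$ from Euler's formula, the isoperimetric bound $p^2\geq\pi\sqrt3\,F_3$, and the final algebra---is essentially the paper's argument (your direct monotonicity finish is even a little cleaner than the paper's proof by contradiction), and your reduction to connected components is legitimate. But the proof has a genuine gap exactly at what you yourself label ``the hard half'': the inequality $D\geq p$ is asserted, not proved. The heuristic you give covers only the case in which every bounded-face corner along the outer walk has angle $\pi/3$, i.e.\ lies in a triangle, and for the general case you write that a discharging argument ``should show that the apparent loss is always recovered.'' That compensation step is not a routine patch; it is the core of the theorem. The paper carries it out by assigning charge $i-6$ to each degree-$i$ vertex and $2k-6$ to each $k$-sided face (total at most $-12$), then transferring across each vertex--face angle $\alpha$ a charge of $0$, $\tfrac{3\alpha}{\pi}-1$, or $1$ according as $\alpha\leq\pi/3$, $\pi/3<\alpha\leq 2\pi/3$, or $\alpha>2\pi/3$, and verifying case by case that vertices of degree $\geq 5$ and all bounded faces (equilateral triangles, rhombi, equilateral pentagons, $k$-gons with $k\geq 6$) end up nonnegative, that the outer face retains at least $b-6$, and that a degree-$i$ vertex with $i\leq 4$ retains at least $i-5$; this is what yields $D\geq b+6$ (your $D\geq p$, with room to spare). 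Until you actually set up and verify such a rule---including the geometric facts that $3$-, $4$- and $5$-gonal faces are equilateral triangles, rhombi and equilateral pentagons, which is where unit lengths enter---the claim $D\geq p$ is unsupported and the proof is incomplete.

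A secondary, fixable looseness: your isoperimetric step applies the inequality to the region $R$, but when the (connected) graph has cut vertices or pendant trees, $\partial R$ need not be a simple closed curve and $R$ may even be disconnected; you then need to apply the inequality piecewise and use $\bigl(\sum_i\operatorname{perim}(R_i)\bigr)^2\geq\sum_i\operatorname{perim}(R_i)^2$, which is precisely the induction the paper performs in its Lemma on $b^2>\pi\sqrt3 f_3$. That detail is in the spirit of what you wrote and is easy to supply; the discharging step is not.
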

The proof combines Kurz and Pinchasi's discharging method with the isoperimetric inequality.
The main result of this paper then follows as a corollary.
\begin{corollary}
    \label{co:n_5}
    In a matchstick graph on $n$ vertices, the number of vertices of degree at most $4$ is greater than $\frac{1}{4} \sqrt{\pi\sqrt{3}\cdot n}$.
\end{corollary}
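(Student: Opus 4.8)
The plan is to extract from the Kurz--Pinchasi discharging a purely combinatorial lower bound on the \emph{deficiency}
\[
D:=4n_1+3n_2+2n_3+n_4=\sum_{v}\max(5-\deg v,0)
\]
in terms of the length $\ell$ of the boundary walk of the outer face of the drawing, and then to combine this with the isoperimetric inequality and a count of equilateral-triangle faces. Since $D$ and $n$ are additive over connected components while $\sqrt{x+y}\le\sqrt x+\sqrt y$, I would first reduce to a connected matchstick graph $G$ on $n\ge 2$ vertices; fixing a crossing-free unit-distance drawing, I write $E$ for its number of edges, $\ell$ for the length of the boundary walk of the unbounded face, $F_b=E-n+1$ for the number of bounded faces, and $t$ for the number of bounded triangular faces, each of which is an equilateral triangle of area $\tfrac{\sqrt3}{4}$.

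The key lemma I would aim for is $D\ge\ell$. A weak form of it is immediate from combinatorial Gauss--Bonnet: the boundary walk has total exterior turning $2\pi$, i.e.\ $\sum(\pi-\beta_v)=2\pi$ with the sum over the vertices $v$ on the boundary, where $\beta_v$ is the sum over the bounded faces at $v$ of their angles at $v$. If every bounded face-corner had angle at least $\pi/3$, then $\beta_v\ge(\deg v-1)\tfrac{\pi}{3}$, which forces $\sum(5-\deg v)\ge\ell+6$ over the boundary vertices and hence $D\ge\ell+6$. The real difficulty --- exactly what the Kurz--Pinchasi discharging is designed for --- is that a bounded face-corner can be \emph{thin}, of angle less than $\pi/3$; such a corner must belong to a non-triangular face, and the resulting surplus of exterior turning has to be pushed onto nearby non-triangular faces and small-degree vertices without overspending the budget they provide. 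I expect adapting their discharging to deliver the clean bound $D\ge\ell$ (indeed $\ge\ell+6$, which is attained by triangular chunks of the triangular lattice) to be the main obstacle, and the only place where the unit-distance structure, rather than mere planarity, is essential.

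Granting this lemma, I would finish as follows. Since every bounded face has length at least $3$, with equality exactly for triangles, and $\sum_{f\text{ bounded}}\operatorname{len}(f)=2E-\ell$, we get $3t+4(F_b-t)\le 2E-\ell$, so with $F_b=E-n+1$,
\[
t\ \ge\ 2E-4n+4+\ell .
\]
If $2E\le 5n-\sqrt{\pi\sqrt3\,n}$, then $D\ge\sum_v(5-\deg v)=5n-2E\ge\sqrt{\pi\sqrt3\,n}$, and this is strict because $5n-2E$ is an integer while $\sqrt{\pi\sqrt3\,n}$ is irrational. Otherwise $2E>5n-\sqrt{\pi\sqrt3\,n}$, whence $t>n-\sqrt{\pi\sqrt3\,n}+\ell+4$. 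The union $K$ of the closed bounded faces is a compact set whose complement is the outer face, hence connected, so each component of $K$ is a topological disk, and the isoperimetric inequality gives $4\pi\operatorname{Area}(K)\le\ell^2$; as $\operatorname{Area}(K)\ge\tfrac{\sqrt3}{4}t$,
\[
\ell^2\ \ge\ \pi\sqrt3\,t\ >\ \pi\sqrt3\bigl(n-\sqrt{\pi\sqrt3\,n}+\ell+4\bigr).
\]
Completing the square rewrites this as
\[
\Bigl(\ell-\tfrac{\pi\sqrt3}{2}\Bigr)^{2}\ >\ \Bigl(\sqrt{\pi\sqrt3\,n}-\tfrac{\pi\sqrt3}{2}\Bigr)^{2}+4\pi\sqrt3 ,
\]
and since $n\ge 2$ and $\ell\ge 3$ make both bracketed quantities nonnegative, $\ell>\sqrt{\pi\sqrt3\,n}$; by the lemma, $D\ge\ell>\sqrt{\pi\sqrt3\,n}$.

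Finally, for Corollary~\ref{co:n_5}, isolated vertices may be discarded one at a time: removing one lowers $n$ and the number of vertices of degree at most $4$ each by one, and $\sqrt{\pi\sqrt3\,n}\le\sqrt{\pi\sqrt3\,(n-1)}+\sqrt{\pi\sqrt3}<\sqrt{\pi\sqrt3\,(n-1)}+4$, so the estimate for $G$ follows from the estimate for $G$ with one isolated vertex deleted. Once $G$ has no isolated vertices, Theorem~\ref{thm:small-deg} gives $n_1+n_2+n_3+n_4\ge\tfrac14\bigl(4n_1+3n_2+2n_3+n_4\bigr)>\tfrac14\sqrt{\pi\sqrt3\,n}$.
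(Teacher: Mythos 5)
Your last paragraph is the actual proof of the corollary, and it is correct and essentially the paper's own argument: delete isolated vertices, apply Theorem~\ref{thm:small-deg} to what remains, and use $n_1+n_2+n_3+n_4\ge\tfrac14(4n_1+3n_2+2n_3+n_4)$; your one-vertex-at-a-time removal via $\sqrt{\pi\sqrt3}<4$ is just a reshuffled form of the paper's estimate $\tfrac14\sqrt{\pi\sqrt3(n-n_0)}+n_0\ge\tfrac14\sqrt{\pi\sqrt3\cdot n}$. (One degenerate caveat, which the paper's proof also glosses over: if \emph{every} vertex is isolated, the reduction bottoms out at the empty graph, where the theorem gives nothing, so that case needs the one-line check $n>\tfrac14\sqrt{\pi\sqrt3\cdot n}$ for $n\ge1$.) The long preamble, by contrast, is an attempted re-derivation of Theorem~\ref{thm:small-deg} itself and should not be counted as a proof of anything: its pivot, the bound $D\ge\ell$ (equivalently the paper's inequality \eqref{eq:n>b}, $4n_1+3n_2+2n_3+n_4\ge b+6$), is precisely the Kurz--Pinchasi discharging step that constitutes the technical heart of the paper, and you explicitly leave it as an expectation; the surrounding bookkeeping (the triangle count $t\ge 2E-4n+4+\ell$, the isoperimetric bound, the completing-the-square finish) does track the paper's Lemma~\ref{isoperimetric-general} and the end of its proof of the theorem, though your claim that each component of the union of closed bounded faces is a topological disk needs the cut-vertex care the paper handles by induction. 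Since the corollary only needs Theorem~\ref{thm:small-deg} as a black box, which your final paragraph invokes, the proposal stands as a correct proof of the corollary along the same lines as the paper.
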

\begin{remark}\label{remark3}
    Theorem~\ref{thm:small-deg} and Corollary~\ref{co:n_5} are sharp up to the constant.
    As shown in Figure~\ref{fig:tiling}, it is possible to construct matchstick graphs with $n=3k^2+3k$ vertices, of which all are of degree $5$ except for six of degree $3$ and $6(k-1)$ of degree $4$.
    The construction consists of a hexagonal piece of the triangular lattice with the central vertex removed, as well as removing every second edge of the boundary of each hexagon except for the smallest and the largest hexagon.
    In this case, $\frac{2n_3+n_4}{\sqrt{n}}=2\sqrt{3}\sqrt{1+\frac1k}$
    is quite far from $\sqrt{\pi\sqrt{3}}$ in Theorem~\ref{thm:small-deg}, partly because for large $k$ the boundary of this example is a regular hexagon instead of circular, and partly because in the inequality $b^2 > \pi\sqrt{3} f_3$, we only consider triangles to bound the area from below.
    In this example there are many quadrilaterals, and it is not clear how to get a non-trivial lower bound for the area of many quadrilaterals in a matchstick graph.
\end{remark}

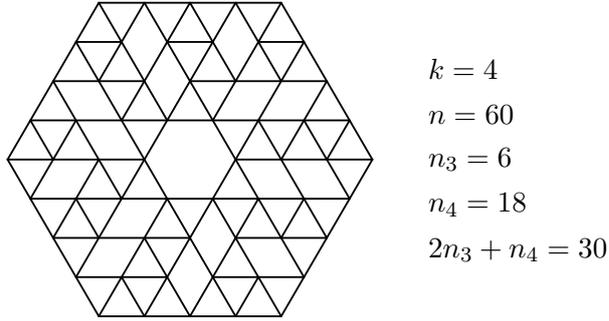
\begin{figure}
\centering
\begin{tikzpicture}[line cap=round,line join=round,x=1.2cm,y=1.2cm,rotate=90]
\clip(-2,-5) rectangle (2,5);
\draw [line width=0.7pt] (0,0.5)-- (0,2);
\draw [line width=0.7pt] (0,2)-- (1.7320508075688772,1);
\draw [line width=0.7pt] (1.7320508075688772,1)-- (1.7320508075688772,-1);
\draw [line width=0.7pt] (0,1.5)-- (0.4330127018922193,1.75);
\draw [line width=0.7pt] (0,1)-- (0.8660254037844386,1.5);
\draw [line width=0.7pt] (0,0.5)-- (1.299038105676658,1.25);
\draw [line width=0.7pt] (0,0.5)-- (0.4330127018922193,0.25);
\draw [line width=0.7pt] (0.4330127018922193,0.25)-- (0.4330127018922193,-0.25);
\draw [line width=0.7pt] (0.4330127018922193,-0.25)-- (1.7320508075688772,0.5);
\draw [line width=0.7pt] (0.4330127018922193,0.25)-- (1.7320508075688772,1);
\draw [line width=0.7pt] (0.8660254037844386,-0.5)-- (1.7320508075688772,0);
\draw [line width=0.7pt] (1.299038105676658,-0.75)-- (1.7320508075688772,-0.5);
\draw [line width=0.7pt] (0.4330127018922193,0.25)-- (1.7320508075688772,-0.5);
\draw [line width=0.7pt] (0.4330127018922193,0.25)-- (0.4330127018922193,1.75);
\draw [line width=0.7pt] (0,1.5)-- (0.4330127018922193,1.25);
\draw [line width=0.7pt] (0.4330127018922193,0.75)-- (1.7320508075688772,0);
\draw [line width=0.7pt] (0.8660254037844386,1.5)-- (0.8660254037844386,0.5);
\draw [line width=0.7pt] (0.8660254037844386,0)-- (0.8660254037844386,-0.5);
\draw [line width=0.7pt] (0.8660254037844386,1)-- (1.7320508075688772,0.5);
\draw [line width=0.7pt] (1.299038105676658,1.25)-- (1.299038105676658,0.75);
\draw [line width=0.7pt] (1.299038105676658,0.25)-- (1.299038105676658,-0.25);
\draw [line width=0.7pt] (-0.43301270189221935,-0.25)-- (-1.7320508075688774,-1);
\draw [line width=0.7pt] (-1.7320508075688774,-1)-- (-1.732050807568877,1);
\draw [line width=0.7pt] (-1.732050807568877,1)-- (0,2);
\draw [line width=0.7pt] (-1.299038105676658,-0.75)-- (-1.7320508075688774,-0.5);
\draw [line width=0.7pt] (-0.8660254037844387,-0.5)-- (-1.7320508075688772,0);
\draw [line width=0.7pt] (-0.43301270189221935,-0.25)-- (-1.7320508075688772,0.5);
\draw [line width=0.7pt] (-0.43301270189221935,-0.25)-- (-0.43301270189221924,0.25);
\draw [line width=0.7pt] (-0.43301270189221924,0.25)-- (0,0.5);
\draw [line width=0.7pt] (0,0.5)-- (-1.2990381056766576,1.25);
\draw [line width=0.7pt] (-0.43301270189221924,0.25)-- (-1.732050807568877,1);
\draw [line width=0.7pt] (0,1)-- (-0.8660254037844383,1.5);
\draw [line width=0.7pt] (0,1.5)-- (-0.4330127018922189,1.75);
\draw [line width=0.7pt] (-0.43301270189221924,0.25)-- (-0.4330127018922189,1.75);
\draw [line width=0.7pt] (-0.43301270189221924,0.25)-- (-1.7320508075688774,-0.5);
\draw [line width=0.7pt] (-1.299038105676658,-0.75)-- (-1.299038105676658,-0.25);
\draw [line width=0.7pt] (-0.8660254037844386,0)-- (-0.8660254037844383,1.5);
\draw [line width=0.7pt] (-1.7320508075688772,0)-- (-0.8660254037844385,0.5);
\draw [line width=0.7pt] (-0.43301270189221913,0.75)-- (0,1);
\draw [line width=0.7pt] (-1.2990381056766578,0.25)-- (-1.2990381056766576,1.25);
\draw [line width=0.7pt] (-1.7320508075688772,0.5)-- (-1.2990381056766578,0.75);
\draw [line width=0.7pt] (-0.8660254037844383,1)-- (-0.4330127018922191,1.25);
\draw [line width=0.7pt] (0.43301270189221935,-0.25)-- (1.7320508075688774,-1);
\draw [line width=0.7pt] (1.7320508075688774,-1)-- (0,-2);
\draw [line width=0.7pt] (0,-2)-- (-1.732050807568877,-1);
\draw [line width=0.7pt] (1.299038105676658,-0.75)-- (1.2990381056766582,-1.25);
\draw [line width=0.7pt] (0.8660254037844387,-0.5)-- (0.8660254037844388,-1.5);
\draw [line width=0.7pt] (0.43301270189221935,-0.25)-- (0.43301270189221974,-1.75);
\draw [line width=0.7pt] (0.43301270189221935,-0.25)-- (0,-0.5);
\draw [line width=0.7pt] (0,-0.5)-- (-0.43301270189221924,-0.25);
\draw [line width=0.7pt] (-0.43301270189221924,-0.25)-- (-0.4330127018922189,-1.75);
\draw [line width=0.7pt] (0,-0.5)-- (0,-2);
\draw [line width=0.7pt] (-0.8660254037844385,-0.5)-- (-0.8660254037844383,-1.5);
\draw [line width=0.7pt] (-1.2990381056766576,-0.75)-- (-1.2990381056766576,-1.25);
\draw [line width=0.7pt] (0,-0.5)-- (-1.2990381056766576,-1.25);
\draw [line width=0.7pt] (0,-0.5)-- (1.2990381056766582,-1.25);
\draw [line width=0.7pt] (1.299038105676658,-0.75)-- (0.8660254037844389,-1);
\draw [line width=0.7pt] (0.4330127018922194,-0.75)-- (-0.8660254037844383,-1.5);
\draw [line width=0.7pt] (0.8660254037844388,-1.5)-- (0,-1);
\draw [line width=0.7pt] (-0.43301270189221913,-0.75)-- (-0.8660254037844385,-0.5);
\draw [line width=0.7pt] (0.43301270189221946,-1.25)-- (-0.4330127018922189,-1.75);
\draw [line width=0.7pt] (0.43301270189221974,-1.75)-- (0,-1.5);
\draw [line width=0.7pt] (-0.43301270189221897,-1.25)-- (-0.8660254037844383,-1);
\begin{scope}
\draw (1,-2.5) node[right]{$k=4$};
\draw (0.5,-2.5) node[right]{$n=60$};
\draw (0,-2.5) node[right]{$n_3=6$};
\draw (-0.5,-2.5) node[right]{$n_4=18$};
\draw (-1,-2.5) node[right]{$2n_3+n_4=30$};
\end{scope}
\end{tikzpicture}
        \caption{Matchstick graphs with $n=3k^2+3k$ vertices, $n_3=6$, $n_4=6(k-1)$ and $n_5=3k^2-3k$.}
        \label{fig:tiling}
\end{figure}

\section{Plane graphs and matchstick graphs}\label{planegraphs}
A \emph{plane graph} $G=(V,E)$ is a drawing of a graph in the plane where all vertices $v \in V$ are distinct points and all edges $uv \in E$ are Jordan arcs joining $u$ and $v$, such that edges intersect only at their endpoints.
A \emph{matchstick graph} is a plane graph with edges drawn as unit length straight-line segments.

We denote the number of vertices by $n$, the number of vertices of degree $i$ by $n_i$, and the number of edges by $e$.
Then $n=\sum_i n_i$ and $2e=\sum_i in_i$.
We next find similar formulas for the total number of faces and the number of faces with $k$ sides.
We need to be careful in how we define the number of sides of a face.
Note that although $G$ is planar, we do not assume that $G$ is $2$-connected, nor even that it is connected.
This is to avoid a complicated induction in the proof of Theorem~\ref{thm:small-deg}.
As usual, we define a \emph{face} of $G$ to be a connected component of the complement of the drawing of $G$ in the plane.
Thus there is a unique unbounded face, and $0$ or more bounded faces.
We say that a face has $k$ sides or is a $k$-gon if the face has $k$ edges on its boundary, where we count an edge twice if both sides of the edge are in the face.
For example, the matchstick graph in Figure~\ref{fig:2edge} has an unbounded face with $4$ sides and a bounded face with $6$ sides, since the edge $e$ counts twice.
\begin{figure}[h]
    \centering
    \begin{tikzpicture}[line cap=round,line join=round,x=1.8cm,y=1.8cm]
    \clip(-1.2006844359112205,-2.4510316746875613) rectangle (0.8027537263570521,-1.3513587150125688);
    \draw [line width=1.2pt] (-1.0684022540606486,-1.8381065002148687)-- (-0.15608621114048593,-1.4286197020998428);
    \draw [line width=1.2pt] (-0.15608621114048593,-1.4286197020998428)-- (0.6715639904230459,-1.9898639838986376);
    \draw [line width=1.2pt] (0.6715639904230459,-1.9898639838986376)-- (-0.2423647781732745,-2.395738602385694);
    \draw [line width=1.2pt] (-1.0684022540606486,-1.8381065002148687)-- (-0.07358793703668776,-1.7363987097357207);
    \draw [line width=1.2pt] (-1.0684022540606486,-1.8381065002148687)-- (-0.2423647781732745,-2.395738602385694);
    \draw [fill=black] (-1.0684022540606486,-1.8381065002148687) circle (1.5pt);
    \draw [fill=black] (-0.15608621114048593,-1.4286197020998428) circle (1.5pt);
    \draw [fill=black] (0.6715639904230459,-1.9898639838986376) circle (1.5pt);
    \draw [fill=black] (-0.2423647781732745,-2.395738602385694) circle (1.5pt);
    \draw [fill=black] (-0.07358793703668776,-1.7363987097357207) circle (1.5pt);
    \draw[color=black] (-0.5080369984083286,-1.910430638754684) node {$e$};
    \end{tikzpicture}
    \caption{The bounded face has $6$ sides with $e$ counting twice.}
    \label{fig:2edge}
\end{figure}
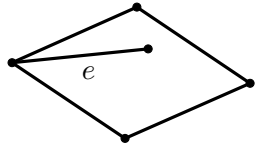
We let $b$ denote the number of sides of the unbounded face, $f_k$ the number of bounded $k$-gons, and $f$ the total number of bounded faces.
Then we have $f=\sum_k f_k$, and by counting edge-face pairs, we obtain $2e=b+\sum_k kf_k$.

Let us consider the possible $k$-gons for some small $k$.
This will be required in the proof of Theorem \ref{thm:small-deg}.
The only way for a $0$-gon to exist is if the graph has no edges.
Since the graph is simple, a $1$-gon cannot exist and the only way for a $2$-gon to exist is if the graph has a single edge.
The only possible $3$-gon in a matchstick graph is an equilateral triangle.
A $4$-gon is an equilateral rhombus unless the graph has only two edges with a common vertex.
Similarly, the only possible bounded $5$-gon is an equilateral pentagon, since the only other combinatorial possibility is a triangle together with a vertex of degree~$1$ joined to one of the vertices of the triangle.
This can only happen if there are exactly $4$ edges, and the fourth edge is outside the triangle.

An \emph{isolated vertex} is one with degree~$0$.
Note that isolated vertices in a matchstick graph  give no structural information about the edges and can simply be removed from the graph.

\section{Proofs}
The isoperimetric inequality says that among all simple closed curves of a fixed length in the plane, the circle is the unique curve that encloses the largest area \cite{blasjo}.
In the proof of Theorem~\ref{thm:small-deg} we will need the following consequence.
The $2$-connected case of this lemma is used in \cite{lavollee}.

\begin{lemma}\label{isoperimetric-general}
Let $G$ be a matchstick graph such that the unbounded face has $b$ edges (with double counting as before) and $f_3$ bounded triangular faces.
Then $b^2 > \pi\sqrt{3}f_3$.
\end{lemma}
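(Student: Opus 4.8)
The plan is to bound the total area enclosed by the drawing of $G$ from below by the combined area of its triangular faces, and from above by the isoperimetric inequality applied to the outer boundary. Each bounded triangular face is an equilateral triangle of side $1$, so it has area $\frac{\sqrt3}{4}$; summing over all $f_3$ of them gives a region of total area at least $\frac{\sqrt3}{4}f_3$. This region sits inside the bounded complement of the unbounded face, so its area is at most the area enclosed by the outer boundary of $G$.

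The subtlety is that the unbounded face need not be bounded by a simple closed curve: $G$ may be disconnected, may have cut vertices, and may have pendant edges or trees hanging off it, so the boundary walk of the unbounded face can repeat vertices and edges. To handle this I would invoke a general form of the isoperimetric inequality for closed curves that are not necessarily simple (equivalently, traverse the boundary walk of the unbounded face as a single closed curve of length $b$, possibly self-touching). The relevant statement is that a closed curve of length $L$ in the plane encloses area at most $L^2/(4\pi)$, where ``encloses'' means the total area of the bounded complementary region; this follows from the classical isoperimetric inequality by a limiting/approximation argument (perturb the curve to be simple, or decompose into simple pieces), and it degrades to an equality only for the circle. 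Applying this with $L=b$ gives that the area available to the triangular faces is at most $b^2/(4\pi)$.

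Combining the two bounds yields $\frac{\sqrt3}{4}f_3 \le \frac{b^2}{4\pi}$, i.e. $\pi\sqrt3\, f_3 \le b^2$. To get the strict inequality claimed in the statement, I would argue that equality in the isoperimetric step is impossible here: the outer boundary of a matchstick graph is a polygonal curve made of unit segments, never a circle, so the isoperimetric inequality is strict unless $f_3 = 0$, in which case $b \ge 0$ and the inequality $b^2 > 0 = \pi\sqrt3 f_3$ holds trivially provided $b>0$; and if $b = 0$ as well then $G$ has no edges, hence no triangular faces, and one checks the degenerate case separately (the statement should be read with the convention that an edgeless graph is excluded or vacuous). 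Alternatively, and more cleanly, one notes that the triangular faces occupy only part of the enclosed region — there are typically other faces too — so the area inequality is already strict before invoking isoperimetry, except possibly in trivial configurations.

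I expect the main obstacle to be making the isoperimetric step rigorous for a non-simple boundary curve: one must be careful about what ``area enclosed'' means when the boundary walk of the unbounded face traverses some edges twice, and must confirm that the area of the union of the bounded faces (which certainly contains all the triangles) is what gets bounded by $b^2/(4\pi)$. The cleanest route is probably to take the topological boundary of the closure of the unbounded face, observe it is a disjoint union of closed curves of total length at most $b$, bound the enclosed area of each by the isoperimetric inequality, and use concavity/superadditivity of the area-versus-perimeter bound (the function $L \mapsto L^2/(4\pi)$ is convex, so spreading the length over several curves only decreases the total bound — wait, one should check the direction) to conclude. Handling disconnectedness and pendant structures via this ``boundary of the closure'' device is what lets us avoid the complicated induction the authors mention.
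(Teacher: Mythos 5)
Your core inequality chain -- total area of the triangles $\frac{\sqrt{3}}{4}f_3$ is at most the area enclosed by the outer boundary, which by isoperimetry is at most $b^2/(4\pi)$ -- is exactly the paper's argument in the case where $G$ is $2$-connected, where the outer boundary genuinely is a simple polygon of perimeter $b$ and the strict inequality comes from a polygon not being a circle. The genuine gap is the step you yourself flag as ``the main obstacle'' and never close: the claim that a non-simple closed boundary walk of length $b$ (or, for disconnected $G$, several such walks of total length $b$) has bounded complement of total area at most $b^2/(4\pi)$. As written you assert this ``follows by a limiting/approximation argument (perturb the curve to be simple)'', you describe the boundary of the closure of the unbounded face as ``a disjoint union of closed curves'' -- which is false in the presence of cut vertices and pendant edges, where the boundary consists of closed walks repeating vertices and edges, not disjoint simple closed curves -- and you leave the superadditivity check unresolved (it does go the right way: $\sum_i b_i^2 \le \bigl(\sum_i b_i\bigr)^2$, so distributing the length over several walks only lowers the bound). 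So the proposal reduces the lemma to an unproved generalization of the isoperimetric inequality, and that generalization is precisely the difficulty the paper's proof is engineered to avoid: the paper inducts on $b$, splitting $G$ at a cut vertex or into connected components into pieces $G'$, $G''$ with $b=b'+b''$ and $f_3=f_3'+f_3''$, and uses $b^2\ge b'^2+b''^2$ together with the strict base/$2$-connected cases.

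Your route can be completed, but it needs an actual argument rather than an appeal to perturbation; for instance: each boundary walk $\gamma_i$ of the unbounded face is a closed curve of length $b_i$ (edges counted with multiplicity, $\sum_i b_i=b$); every bounded triangular face lies in the bounded complement of $\bigcup_i\gamma_i$, since a path from its interior to infinity must cross the boundary of the unbounded face; the bounded complement of a finite union of closed curves is contained in the union of their convex hulls (a point outside every hull lies in a nonempty intersection of open half-planes missing all the curves, and such an intersection is unbounded); the perimeter of the convex hull of a closed curve is at most its length; and the isoperimetric inequality for convex sets, strict because these hulls are polygons and not discs, gives total area strictly less than $\sum_i b_i^2/(4\pi)\le b^2/(4\pi)$ whenever $b>0$. (Your observation that the edgeless case $b=0$ must be excluded is correct and applies to the paper's statement as well; your alternative strictness argument via ``there are other faces too'' does not work in general, since all bounded faces may be triangles.) Either supply such an argument or adopt the paper's induction; as it stands the key isoperimetric step is assumed rather than proved.
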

\begin{proof}
First consider the case where $G$ is 2-connected.
The polygon bounding the unbounded face has $b$ edges and encloses $f_3$ equilateral triangles of area $\sqrt{3}/4$ each.
Thus, the polygon has area $A\geq \frac{\sqrt{3}}{4}f_3$ and perimeter $b$.
By the isoperimetric inequality, 
$b^2 > 4\pi A\geq \pi\sqrt{3}f_3$.

Now assume that $G$ is not 2-connected.
If $b=2$ then $f_3=0$ and the lemma is trivially true.
We now use induction on $b$.
Thus we assume that $b\geq 3$ and that the lemma is true for smaller $b$.

Since $G$ is not $2$-connected there is either a cut-point $v$, since $b \geq 3$, or $G$ is not connected.
In either case we can split $G$ into subgraphs $G'$ and $G''$ with at most the vertex $v$ in common and with at least two vertices each.
Let $b'$ and $b''$ denote the number of edges on their boundaries and $f_3'$ and $f_3''$ the number of triangular faces respectively.
It is clear that $b=b'+b''$ and $f_3=f_3'+f_3''$, so by induction
\begin{equation*}
    b^2 \geq b'^2+b''^2
    > \pi\sqrt{3}f'_3 +  \pi\sqrt{3}f''_3
    = \pi\sqrt{3}f_3. \qedhere
\end{equation*}

\end{proof}

\smallskip
The proof of Theorem~\ref{thm:small-deg} below uses the discharging method from \cite{kurz}.
The discharging method is used in graph theory and combinatorial geometry to prove results about planar graphs, geometric graphs and line arrangements \cite{cranston, radoicic}, going back to the proof of the Four Colour Theorem.
As far as we know, the charging rule in \cite{kurz} is the first one that uses Euclidean geometry, as the redistribution of charges in this rule depends on the sizes of angles.
We will use the exact same charging rule.
Compared to the proof in \cite{kurz}, our proof is slightly more technical not only because we need to consider vertices of degree other than $5$ too, but also because we make a careful consideration of the case when the graph is not $2$-regular.

\begin{proof}[Proof of Theorem \ref{thm:small-deg}]
    Since the theorem is easily checked if $n\leq 4$, we assume that $n\geq 5$.
    By the discussion in Section~\ref{planegraphs} of the possible $k$-gons for small $k$, we have $f_0=f_1=f_2=0$, and each $k$-gon with $3\leq k\leq 5$ has a $k$-cycle as boundary.
    The unbounded face cannot be a triangle, otherwise there are no other edges or vertices.
    Thus we have $b\geq 4$, and all triangles are bounded faces.
    We will use the same discharging method as in \cite{kurz}, which corresponds to the vertex charging version of \cite[Proposition~3.1]{cranston}.
    First we assign a charge of $i-6$ to each vertex of degree $i$ and a charge of $2k-6$ to each face of $k$ sides.
    As we show next, the total charge summed over all vertices and faces of the matchstick graph is at most $-12$.
    By counting the edge-face and edge-vertex pairs we find
    \begin{equation}
        b + \sum_{k\geq 3} k f_k = \sum_{i\geq 1} i n_i = 2e. \label{eq:sums2e}
    \end{equation}
    Using Euler's Formula $\sum_i n_i -e + \sum_k f_k = c$, where $c \geq 1$ is the number of connected components, we have
    \begin{equation*}
        -3 \geq -3 \sum_{i\geq 1} n_i +e +2e - 3 \sum_{k\geq 3} f_k
    \end{equation*}
    and using \eqref{eq:sums2e} this gives
    \begin{align}
        -12 &\geq -6 \sum_{i\geq 1} n_i + \sum_{i\geq 1} i n_i + 2\sum_{k\geq 3} k f_k + 2(b-3) - 6 \sum_{k\geq 3} f_k \nonumber\\
        &= \sum_{i\geq 1} (i - 6) n_i + \sum_{k\geq 3} ( 2k - 6 ) f_k + 2b-6. \label{eq:sum}
    \end{align}
    We now redistribute the charge as follows.
    For an angle $\alpha$ between a face and a vertex, transfer from the face to the vertex a charge of
    \begin{equation*}
        \begin{cases}
            0 &\text{ if } \alpha \leq \frac{\pi}{3}, \\
            \frac{3 \alpha}{\pi} - 1 &\text{ if } \frac{\pi}{3} < \alpha \leq \frac{2\pi}{3}, \\
            1 &\text{ if } \alpha > \frac{2\pi}{3}.
        \end{cases}
    \end{equation*}
    \begin{figure}[ht]
        \centering
        \begin{tikzpicture}[xscale=0.75,yscale=2.5]
            \draw[->] (0,0) -- (2*pi+0.8,0) node[below] {$\alpha$};
            \draw[->] (0,0) -- (0,1);
            \draw (pi/3,0) -- (pi/3,-0.05) node[below] {$\frac{\pi}{3}$};
            \draw (2*pi/3,0) -- (2*pi/3,-0.05) node[below] {$\frac{2\pi}{3}$};
            \draw (2*pi,0) -- (2*pi,-0.05) node[below] {$2\pi$};
            \draw (0,1/2) -- (-0.1,1/2) node[left] {$1$};
            \draw (pi/3,0) -- (2*pi/3,1/2);
            \draw (2*pi/3,1/2) -- (2*pi,1/2);
        \end{tikzpicture}
        \caption{Charge transfer from a face to a vertex at an angle of size $\alpha$.}
        \label{fig:charge}
    \end{figure}
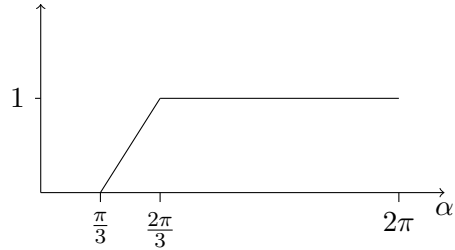
    Since this discharging rule was given without motivation in \cite{kurz}, we try to motivate it now.
    We want the charges of all faces and all vertices of degree at least $5$ to end up non-negative.
    Since triangles start off with a charge of $0$, we would like there to be no transfer of charge from a triangle to its vertices.
    Thus, since all triangles are equilateral, we do not want any transfer of charge in angles of size $\pi/3$.
    Next, consider a hexagonal face.
    In order for this face to end up with a non-negative charge, we have to move a charge of on average at most $1$ to each of its vertices.
    Thus, we cap the charge transfer at $1$, and since the average degree of an angle of a hexagon is $2\pi/3$, we let this maximum charge transfer start at this angle.
    Third, a vertex of degree $5$ needs to get a charge of at least $1$.
    If we let the transferred charge change linearly from $\pi/3$ to $2\pi/3$, then, at least in the case where all angles at the vertex of degree $5$ lie between $\pi/3$ and $2\pi/3$, the vertex receives a total charge of $1$, and thus ends up with a charge of $0$.
    
    We next show that indeed, in all cases, the charges of all faces and all vertices of degree at least $5$ will become non-negative, and we obtain the lower bound~\eqref{eq:-n<-b} for the total charge in terms of $n_i$ ($i=1,2,3,4$) and $b$.
    
    Let us first consider the vertices.
    A vertex of degree $i=1,2,3$ must have an angle $\alpha \geq \frac{2\pi}{3}$ and hence gains at least a charge of $1$ from the redistribution, ending with a charge of at least $i-5$.
    This formula also holds for vertices of degree $4$:
    this is clear if there is an angle $\alpha_j \geq \frac{2\pi}{3}$.
    Otherwise for each angle $\alpha_j$ there is a contribution of at least $\frac{3\alpha_j}{\pi}-1$, so the total contribution is $\frac{3}{\pi}\sum_j \alpha_j - 4 = 2 > 1$.
    For a vertex of degree at least $6$ it is sufficient to note that the initial charge is non-negative, and hence so is the final charge.
    Lastly, for a vertex of degree~$5$ let $s > 0$ be the number of its angles strictly larger than $\frac{\pi}{3}$.
    Since the initial charge is $-1$, if any angle is greater than $\frac{2\pi}{3}$ then the final charge is non-negative.
    Otherwise the sum of these $s$ angles is at least $2\pi - \frac{\pi}{3} (5-s) = \frac{\pi}{3} (s+1)$ and hence the final charge is at least $-1 + \frac{3}{\pi}\cdot\frac{\pi}{3}(s+1)-s=0$.
    From these calculations we can bound the total vertex charge after redistribution from below by
    \begin{equation}
        -4n_1-3n_2 -2n_3 -n_4. \label{eq:vertices}
    \end{equation}
    
    Next consider bounded faces of $k$ sides.
    For $k=3$, a triangular face, the initial charge is 0 and all angles are $\frac{\pi}{3}$ so the final charge stays at 0.
    For $k=4$ the initial charge is $2$.
    Since the face must be a rhombus, each angle is equal to at least one other.
    If exactly two angles are greater than $\frac{2\pi}{3}$ then the final charge is $0$.
    Otherwise all angles must be greater than $\frac{\pi}{3}$ and the final charge is $2-(\frac{3}{\pi} \cdot 2\pi - 4) = 0$.
    For $k=5$ the initial charge is $4$, and as observed before, the face must be a pentagon.
    Let $t$ be the number of angles of the pentagon strictly larger than $\frac{2\pi}{3}$.
    Then the sum of the angles less than $\frac{2\pi}{3}$ is at most $3\pi - \frac{2\pi}{3}t$, with a transferred charge of at most $\frac{3}{\pi} \left ( 3\pi - \frac{2\pi}{3}t \right )-(5-t) = 4-t$.
    Adding the charges of the $t$ angles, totalling to $t$, we get a final transferred charge of at most $4$.
    
    If the number of sides $k$ is at least 6, the initial charge is $2k-6$ and the face can lose up to the maximum possible of $k$, since a $k$-gon has at most $k$ vertices.
    Hence the final charge is at least $k-6\geq 0$.
    In particular, the unbounded face with $b$ edges ends up with a charge of at least $b-6$.
    We can therefore bound below the total charge of faces after redistribution by $b-6$.
    This, together with \eqref{eq:vertices} and~\eqref{eq:sum} gives
    \begin{equation}
        -12 \geq -4n_1-3n_2 -2n_3 -n_4 + b - 6,
        \label{eq:-n<-b}
    \end{equation}
    which rearranges to
    \begin{equation}
        4n_1+3n_2+2n_3+n_4 \geq b+6.
        \label{eq:n>b}
    \end{equation}

    Now we can write Euler's Formula as
    \begin{equation*}
        4n-2e-2e+4\sum_{k \geq 3} f_k \geq 4,
    \end{equation*}
    and using $2e = b+\sum k f_k$ from \eqref{eq:sums2e} we get
    \begin{equation*}
        4n-2e-b+\sum_{k \geq 3}(4-k)f_k \geq 4.
    \end{equation*}
    Rearranging this gives
    \begin{align}
        2e&\leq 4n-4-b+f_3-(f_5+2f_6+\dots) \nonumber \\
        &\leq 4n-8+f_3.
        \label{eq:enf}
    \end{align}
    Suppose for contradiction that the theorem is false. Then
    \begin{equation*}
        4n_1+3n_2+2n_3+n_4 \leq \sqrt{\pi \sqrt{3} \cdot n},
    \end{equation*}
    hence
    \begin{align*}
        2e &= \sum_{i \geq 1} in_i \\
        &\geq n_1+2n_2+3n_3+4n_4+5\sum_{i\geq 5} n_i \\
        &=5n-4n_1-3n_2-2n_3-n_4 \\
        &\geq 5n-\sqrt{\pi\sqrt{3}\cdot n}.
    \end{align*}
    Combining this with \eqref{eq:enf} we get the inequality
    \begin{equation*}
        5n-\sqrt{\pi\sqrt{3}\cdot n} \leq 4n-8+f_3
    \end{equation*}
    and hence
    \begin{equation*}
        f_3 \geq n-\sqrt{\pi\sqrt{3}\cdot n}+8.
    \end{equation*}
    Substituting this into the inequality $b^2 > \pi\sqrt{3}f_3$ from Lemma~\ref{isoperimetric-general} gives
    \begin{equation*}
        b > \sqrt{\pi\sqrt{3}(n-\sqrt{\pi\sqrt{3}\cdot n}+8)}.
    \end{equation*}
    Finally, substituting this into \eqref{eq:n>b} gives
    \begin{align*}
        4n_1+3n_2+2n_3+n_4 &> 6+\sqrt{\pi\sqrt{3} \left (n-\sqrt{\pi\sqrt{3}\cdot n}  +8 \right)} \\
        &> \sqrt{\pi\sqrt{3}\cdot n},
    \end{align*}
which finishes the proof once we verify the last inequality.
Suppose to the contrary that
    \begin{equation*}
        \sqrt{\pi\sqrt{3} \left (n-\sqrt{\pi\sqrt{3}\cdot n} +8\right)} \leq \sqrt{\pi\sqrt{3}\cdot n} - 6.
    \end{equation*}
    Then squaring both sides gives a contradiction
    \begin{align*}
        \pi\sqrt{3}\cdot n-\pi\sqrt{3}\sqrt{\pi\sqrt{3}\cdot n} + 8\pi\sqrt{3} &\leq \pi\sqrt{3}\cdot n - 12\sqrt{\pi\sqrt{3}\cdot n}+36, \\
        (12-\pi\sqrt{3})\sqrt{\pi\sqrt{3}\cdot n} &\leq 36-8\pi\sqrt{3} < 0. \qedhere
    \end{align*}
        
\end{proof}

\begin{proof}[Proof of Corollary \ref{co:n_5}]
    Consider the matchstick graph $G$ and construct its subgraph $G'$ by removing its isolated vertices, i.e. $n_0'=0$.
    Applying the previous theorem on $G'$ we have that
    \begin{equation}
        4n_1'+3n_2'+2n_3'+n_4' > \sqrt{\pi\sqrt{3}\cdot n'}.
        \label{eq:G'}
    \end{equation}
    Clearly removing the isolated vertices does not change the number of vertices of each degree $i\geq1$ hence $n_i'=n_i$.
    We also know that $n'=n-n_0$, so \eqref{eq:G'} gives
    \begin{equation*}
        4n_1+3n_2+2n_3+n_4 > \sqrt{\pi\sqrt{3}(n-n_0)},
    \end{equation*}
    and hence
    \begin{equation*}
        n_1+n_2+n_3+n_4 > \frac{1}{4}\sqrt{\pi\sqrt{3}(n-n_0)}.
    \end{equation*}
    Now adding back the isolated vertices to both sides we obtain
    \begin{align*}
        n_0+n_1+n_2+n_3+n_4 &> \frac{1}{4}\sqrt{\pi\sqrt{3}(n-n_0)} + n_0\\
        &\geq \frac{1}{4}\sqrt{\pi\sqrt{3}\cdot n} - \frac{1}{4}\sqrt{\pi\sqrt{3}\cdot n_0} + n_0\\ & \geq \frac{1}{4}\sqrt{\pi\sqrt{3}\cdot n}. \qedhere 
    \end{align*}
\end{proof}

\section*{Acknowledgement} We thank the referees for their careful proof-reading, for providing the example in Remark~\ref{remark3}, and for improving the exposition of the proof of Theorem~\ref{thm:small-deg}.

\end{document}